\newtheorem{theorem}{Theorem}
\newtheorem*{thm}{Theorem}
\newtheorem{lemma}{Lemma}
\newtheorem*{question}{Question}
\begin{document}

\title[]{Local sign changes of polynomials}

\author[]{Stefan Steinerberger}
\address{Department of Mathematics, University of Washington, Seattle, WA 98195, USA} \email{steinerb@uw.edu}

\keywords{Laplacian eigenfunctions, Polynomials, Trigonometric Polynomials}
\subjclass[2010]{12D10, 35P10, 42A05} 
\thanks{S.S. is supported by the NSF (DMS-2123224) and the Alfred P. Sloan Foundation.}

\begin{abstract} The trigonometric monomial $\cos(\left\langle k, x \right\rangle)$ on $\mathbb{T}^d$, a harmonic polynomial $p: \mathbb{S}^{d-1} \rightarrow \mathbb{R}$
of degree $k$ and a Laplacian eigenfunction $-\Delta f = k^2 f$ have root in each ball of radius $\sim  \|k\|^{-1}$ or  $\sim  k^{-1}$, respectively. We extend this to linear combinations and show that for any trigonometric polynomials on $\mathbb{T}^d$, any polynomial $p \in \mathbb{R}[x_1, \dots, x_d]$ restricted to $\mathbb{S}^{d-1}$ and
any linear combination of global Laplacian eigenfunctions on $ \mathbb{R}^d$ with $d \in \left\{2,3\right\}$
the same property holds for any ball whose radius is given by the sum of the inverse constituent frequencies. We also refine the fact that an eigenfunction $- \Delta \phi = \lambda \phi$ in $\Omega \subset \mathbb{R}^n$ has a root in each $B(x, \alpha_n \lambda^{-1/2})$ ball: the positive and negative mass in each $B(x,\beta_n \lambda^{-1/2})$ ball cancel when integrated against $\|x-y\|^{2-n}$.

 \end{abstract}
\maketitle

\section{Introduction}
The purpose of this paper is to prove same type of result for
\begin{enumerate}
\item trigonometric polynomials on the torus $\mathbb{T}^d$
\item the restriction of polynomials $p \in \mathbb{R}[x_1, \dots, x_d]$ to the unit sphere $\mathbb{S}^{d-1}$
\item and global solutions of $(-\Delta + \lambda) \phi = 0$ on $\mathbb{R}^d$ where $d=2,3$.
\end{enumerate}  
In each of these settings a single basis object (a trigonometric monomial, a harmonic polynomial, a Laplacian eigenfunction) has many roots: each ball with radius inversely proportional to degree/frequency is guaranteed to contain a root. We will
 extend this to linear combinations and show that they still have many roots on a suitable scale.
A result in this style was first proven by Kozma-Oravecz \cite{kozma}.
\begin{thm}[Kozma-Oravecz \cite{kozma}] Let $f: \mathbb{T}^d \rightarrow \mathbb{R}$ be a real-valued trigonometric polynomial with mean value 0 of the form
$$ f(x) = \sum_{k \in S} a_{k} \exp\left(2 \pi i \left\langle x, k \right\rangle\right),$$
where $S \subset \mathbb{Z}^d$. Then $f$ has a zero in each ball of radius
$$ r(f) = \frac{1}{4} \sum_{k \in S} \frac{1}{\|k\|}.$$
\end{thm}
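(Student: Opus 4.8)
I would argue by contradiction, producing a nonnegative ``annihilating'' measure via convolution. Suppose $f$ had no zero in some ball $B = B(x_0, r(f)) \subset \mathbb{T}^d$. Since $f$ is continuous and $B$ is connected, $f$ would have a fixed sign on $B$; say $f > 0$ there (the case $f < 0$ is identical). The aim is then to construct a probability measure $\mu$ supported in $B$ that annihilates every exponential occurring in $f$, i.e.\ $\int_{\mathbb{T}^d} e^{2\pi i\langle x,k\rangle}\,d\mu(x) = 0$ for all $k \in S$. For such a $\mu$ one gets $\int_{\mathbb{T}^d} f\, d\mu = \sum_{k\in S} a_k \int e^{2\pi i\langle x,k\rangle}\, d\mu = 0$, which contradicts $\int f\, d\mu > 0$ since $\mu$ is nonnegative, nonzero, and carried by $B$; hence $f$ must vanish in $B$.

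The measure is assembled one frequency at a time. For $k\in\mathbb{Z}^d\setminus\{0\}$ let $\nu_k$ be the uniform probability measure on the segment $\{t\,k/\|k\| : 0\le t\le 1/\|k\|\}$, a segment of length $1/\|k\|$ in direction $k/\|k\|$. A one-line substitution gives $\int e^{\pm 2\pi i\langle x,k\rangle}\,d\nu_k(x) = \int_0^1 e^{\pm 2\pi i s}\,ds = 0$, so $\nu_k$ annihilates both $e^{2\pi i\langle x,k\rangle}$ and $e^{-2\pi i\langle x,k\rangle}$. Because $f$ is real-valued with mean zero we have $S = -S$, $0\notin S$, and $a_{-k}=\overline{a_k}$; choose a set $R\subseteq S$ of representatives of the pairs $\{k,-k\}$, so that $|R| = |S|/2$, and let $\mu$ be a translate of $\ast_{k\in R}\nu_k$. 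Convolution on $\mathbb{T}^d$ multiplies Fourier coefficients, so $\mu$ annihilates $e^{2\pi i\langle x,\ell\rangle}$ whenever some $\nu_k$ with $k\in R$ does; since each $\ell\in S$ satisfies $\ell\in R$ or $-\ell\in R$, the measure $\mu$ annihilates $e^{2\pi i\langle x,\ell\rangle}$ for every $\ell\in S$. Moreover $\mu$ is a probability measure, being a convolution of probability measures.

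Finally I would verify the support. The support of $\ast_{k\in R}\nu_k$ lies in the Minkowski sum of the constituent segments, hence in a Euclidean ball of radius $\sum_{k\in R}\tfrac{1}{2\|k\|}$ about the sum of the midpoints (each length-$1/\|k\|$ segment sits in a ball of radius $1/(2\|k\|)$ about its midpoint, and Minkowski addition of balls adds centers and radii). Since $R$ contains exactly one of each pair $\{k,-k\}$ and $\|k\|=\|-k\|$, this radius equals $\tfrac14\sum_{k\in S}\tfrac{1}{\|k\|} = r(f)$ — the factor $\tfrac14$ being the product of the two halvings just described. Translating $\mu$ so that this ball is centered at $x_0$ only multiplies its Fourier coefficients by a unimodular factor and hence preserves the vanishing, and now $\operatorname{supp}\mu\subseteq B(x_0,r(f))=B$. (If $r(f)\ge 1/2$ the asserted ball is all of $\mathbb{T}^d$ and the claim is immediate because $f$ has mean zero, so one may assume $r(f)<1/2$, where balls on $\mathbb{T}^d$ are isometric to Euclidean balls and the above reads off literally.)

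The construction itself is essentially the whole proof: once one thinks of convolving uniform segment measures and noticing that each such factor already kills a $\pm$-pair of frequencies, every remaining step is routine. Accordingly I expect the only genuine subtleties to be the $\pm$-pairing — which is exactly what upgrades the naive constant $\tfrac12$ to the claimed $\tfrac14$ — and the minor bookkeeping of reading Minkowski sums and balls on $\mathbb{T}^d$ rather than on $\mathbb{R}^d$.
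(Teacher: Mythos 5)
Your construction is correct, and it is essentially the original Kozma--Oravecz argument (the paper only cites this theorem and does not reprove it): convolving $f$ with uniform measures on segments of length $1/\|k\|$ in the direction $k/\|k\|$ kills the conjugate pair $\pm k$ at the cost of shrinking the positivity ball by $1/(2\|k\|)$, and summing over one representative per pair yields the stated $\tfrac14\sum_{k\in S}\|k\|^{-1}$. Your version packages the induction into a single convolution $\ast_{k\in R}\nu_k$ and a duality/contradiction step, which is a clean equivalent formulation. It is worth seeing how this sits next to the paper's proof of Theorem 1, which follows the same blueprint but replaces the segment measure by the indicator of a Euclidean ball of radius $\delta^*=j_{d/2,1}/(2\pi\lambda_n)$: the Fourier transform of a ball is a radial Bessel factor, so one convolution annihilates the \emph{entire sphere} of frequencies with $\|k\|=\lambda_n$ at once, which is exactly what upgrades the bound from a sum over summands to a sum over distinct norms. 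Your segment measures cannot do this (a segment only kills the two frequencies collinear with it), which is why your argument proves the quoted theorem but not Theorem 1.

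One small repair: your parenthetical claim that for $r(f)\ge 1/2$ the ball is all of $\mathbb{T}^d$ is false for $d\ge 2$ (the diameter of $\mathbb{T}^d$ is $\sqrt{d}/2$), so the dichotomy as stated leaves a gap for $1/2\le r(f)<\sqrt{d}/2$. The fix is that no case split is needed at all: the quotient map $\mathbb{R}^d\rightarrow\mathbb{T}^d$ is $1$-Lipschitz for the torus metric, so if the Minkowski sum of the segments lies in a Euclidean ball $B(c,r(f))$ in $\mathbb{R}^d$, its image lies in the torus ball of the same center and radius, and the rest of your argument goes through verbatim for every $r(f)$.
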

Note that $f$ being real-valued necessarily entails that $-S = S$ and that $a_{-k} = a_k$. The function having mean value 0 implies $0 \notin S$.
In one dimension, $d=1$, the result is sharp up to constants: \cite[Theorem 2]{kozma} shows that if $f$ has frequencies supported in
$[-A -B, -A] \cup [A, A+B]$, then the maximum length of an interval without sign change is $(B+1)/(2A+B)$. The question
dates back at least to a 1965 paper of Taikov \cite{taikov} with an extremal trigonometric polynomial given by Babenko \cite{babenko}. The same
extremal polynomial also appears in \cite{gil, tab}.

\section{Results}
\subsection{Trigonometric Polynomials.} We start by proving a result in the style of Kozma-Oravecz. We show that instead
of counting the number of summands, it suffices to look at the number of contributing frequencies.

\begin{theorem} If $f: \mathbb{T}^d \rightarrow \mathbb{R}$ is a real-valued trigonometric polynomial with mean value 0 of the form
$$ f(x) = \sum_{k \in S} a_{k} \exp\left(2 \pi i \left\langle x, k \right\rangle\right),$$
then, introducing
$ \Lambda = \left\{ \|k\|: k \in S \right\},$
  $f$ has a zero in each ball of radius
$$ r(f) = d^{3/2} \sum_{\lambda \in \Lambda} \frac{1}{\lambda}.$$
\end{theorem}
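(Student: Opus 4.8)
The plan is to argue by contradiction using an iterated spherical‑mean operator built from the frequency shells of $f$. Suppose $f$ has no zero in some ball $B(x_0,r)$ with $r=d^{3/2}\sum_{\lambda\in\Lambda}\lambda^{-1}$; since $f$ is real and continuous we may assume $f>0$ on $\overline{B(x_0,r)}$. Decompose $f=\sum_{\lambda\in\Lambda}f_\lambda$, where $f_\lambda(x)=\sum_{\|k\|=\lambda}a_k e^{2\pi i\langle x,k\rangle}$ collects the modes of Euclidean norm $\lambda$. Because $0\notin S$, each $\lambda$ is positive, and since $-S=S$ with $a_{-k}=a_k$ each $f_\lambda$ is a real Laplacian eigenfunction, $-\Delta f_\lambda=4\pi^2\lambda^2 f_\lambda$.

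Next I would recall that the spherical‑mean operator $M_\rho g=g*\sigma_\rho$, with $\sigma_\rho$ the uniform probability measure on the sphere of radius $\rho$, is a Fourier multiplier: it multiplies the mode $e^{2\pi i\langle x,k\rangle}$ by $\widehat{\sigma}(2\pi\rho k)=\Omega_d(2\pi\rho\|k\|)$, where $\Omega_d(t)=\Gamma(d/2)(t/2)^{-(d-2)/2}J_{(d-2)/2}(t)$ satisfies $\Omega_d(0)=1$ and has smallest positive zero $j^\ast:=j_{(d-2)/2,1}$ (for instance $\Omega_1=\cos$, $\Omega_2=J_0$, $\Omega_3=\operatorname{sinc}$). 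Set $\rho_\lambda:=j^\ast/(2\pi\lambda)$ for each $\lambda\in\Lambda=\{\lambda_1,\dots,\lambda_m\}$ and let $T:=M_{\rho_{\lambda_1}}\circ\cdots\circ M_{\rho_{\lambda_m}}$. These operators commute (they are convolutions), and on the shell $f_\lambda$ the operator $T$ acts as multiplication by $\prod_i\Omega_d(2\pi\rho_{\lambda_i}\lambda)$; the factor with $\lambda_i=\lambda$ equals $\Omega_d(j^\ast)=0$, so $Tf_\lambda\equiv 0$ for every $\lambda$ and hence $Tf\equiv 0$. On the other hand $T$ is convolution with the probability measure $\nu:=\sigma_{\rho_{\lambda_1}}*\cdots*\sigma_{\rho_{\lambda_m}}$, which is supported in the ball of radius $\sum_{\lambda\in\Lambda}\rho_\lambda$.

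Finally I would check that this support radius is at most $r$, which reduces to the elementary estimate $j^\ast=j_{(d-2)/2,1}\le 2\pi d^{3/2}$ (a crude bound such as $j_{\nu,1}\le 2\nu+3$ together with $\tfrac{d+1}{2\pi}\le \tfrac{2d}{2\pi}\le d^{3/2}$ for $d\ge1$ suffices, with the cases $d\in\{1,2,3\}$ checkable by hand). Then $\operatorname{supp}\nu\subset B(0,r)$, so $0=Tf(x_0)=\int f(x_0-y)\,d\nu(y)>0$ because $f>0$ on $B(x_0,r)$ — a contradiction, which proves the theorem. The only genuine subtlety is that we work on $\mathbb{T}^d=\mathbb{R}^d/\mathbb{Z}^d$: one lifts each $\sigma_{\rho_\lambda}$ to $\mathbb{R}^d$ and convolves on the torus, where both the eigenvalue identity and the support statement $\operatorname{dist}_{\mathbb{T}^d}(0,\cdot)\le\rho$ on $\operatorname{supp}\sigma_\rho$ persist. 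The bulk of the work is recognizing this iterated‑spherical‑mean construction; once it is in place the remaining estimates are routine.
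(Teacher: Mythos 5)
Your proof is correct, and it takes a genuinely different (and in some respects stronger) route than the paper's. The paper argues by induction on $\#\Lambda$: the inductive step convolves with the indicator of a ball of radius $j_{d/2,1}/(2\pi\lambda_n)$, whose Fourier transform vanishes on the top frequency shell, and the base case $\#\Lambda=1$ is handled spectrally, via nodal domains, domain monotonicity and the first Dirichlet eigenvalue $j_{d/2-1,1}^2/r^2$ of the Euclidean ball; the exponent $3/2$ in $d^{3/2}$ enters only through the crude diameter bound used in that base case. You instead compose all the annihilating convolutions at once --- using sphere surface measures rather than ball indicators, which also shifts the Bessel index from $d/2$ to $d/2-1$ --- so that $Tf\equiv 0$ outright and positivity of $f$ on the ball yields an immediate contradiction. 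This eliminates both the induction and the spectral-theoretic base case, and it gives the sharper constant $\tfrac{1}{2\pi}j_{d/2-1,1}\le\tfrac{j_{0,1}}{4\pi}d\approx 0.2\,d$ per frequency, i.e.\ linear growth in $d$, which is precisely the improvement the paper speculates about; specialized to $\#\Lambda=1$ it also answers, with room to spare, the question posed after the proof of Theorem~1. Two minor points to tidy: you cannot literally assume $f>0$ on the closed ball, only on the open one, but since $\sum_{\lambda}\rho_\lambda$ is strictly smaller than $r(f)$ the support of $\nu$ lies in a compact subset of the open ball, so the contradiction $0=Tf(x_0)>0$ survives; and it is worth saying explicitly that the mean-zero hypothesis is used exactly to ensure the $k=0$ mode (on which every $M_\rho$ acts as the identity) is absent, so that $T$ really kills all of $f$.
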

The result is identical (up to the value of the constant) to the result of Kozma-Oravecz in dimension $d=1$.
The improvement is more pronounced in higher dimensions where many different trigonometric polynomials may correspond to the same frequency (in higher dimensions, a sphere can contain many lattice points). 
The proof indicates that the optimal constant may perhaps be expected to grow linearly (or slower) in the dimension, we comment on this after the proof.

\subsection{Spherical Harmonics} There is an analogous result for the restriction of arbitrary polynomials on the unit sphere. If $p_n \in \mathbb{R}^{}[x_1, \dots, x_d]$ is a polynomial of degree $n$ in $\mathbb{R}^d$, then its restriction onto the unit sphere $\mathbb{S}^{d-1}$ can be expressed as a linear combination of harmonic polynomials of degree at most $n$
$$ p_n(x)\big|_{\mathbb{S}^{d-1}} = \sum_{k=0}^{n} a_k f_k(x)\qquad \qquad \mbox{where} \quad f_k \in \mathcal{H}_k^d.$$
We recall that the space of harmonic polynomials of degree $k$ is
$$ \mathcal{H}_k^d = \left\{ f \in \mathbb{R}\left[x_1, \dots, x_d \right]: f~\mbox{homogenenous of degree}~k~\mbox{and}~ \Delta f = 0 \right\}.$$
There exists an elementary argument that if $f \in  \mathcal{H}_k^d$, then $f$ has zero on each ball of
radius $c_d k^{-1}$ (see below). This can be extended to sums of harmonic polynomials.

\begin{theorem} If $p \in \mathbb{R}[x_1, \dots, x_d]$ has the restriction 
$$ p(x)\big|_{\mathbb{S}^{d-1}} = \sum_{k \in S}^{} a_k f_k(x)\qquad \qquad \mbox{where} \quad f_k \in  \mathcal{H}_k^d$$
and mean value 0 on $\mathbb{S}^{d-1}$, then
 $p\big|_{\mathbb{S}^{d-1}}$ has a zero on each (geodesic) ball of radius
$$ r =  \pi^2 d \sum_{k \in S} \frac{1}{k}.$$
\end{theorem}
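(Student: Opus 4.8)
The plan is to run the argument for trigonometric polynomials on $\mathbb{S}^{d-1}$, with Euclidean spherical averages replaced by geodesic spherical averages. Arguing by contradiction, suppose $p\big|_{\mathbb{S}^{d-1}}$ has no zero on a geodesic ball $B(x_0,r)$. We may assume $r < \pi$ (otherwise the ball is all of $\mathbb{S}^{d-1}$ and the mean-value-$0$ hypothesis already forces a sign change unless $p \equiv 0$), so $B(x_0,r)$ is connected, and after replacing $p$ by $-p$ we have $p > 0$ on $B(x_0,r)$. For $0 < \theta < \pi$ let $A_\theta$ be the operator sending a continuous $g$ to the function whose value at $x$ is the average of $g$ over the geodesic sphere $\partial B(x,\theta)$, taken against normalized surface measure.

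By the addition theorem for spherical harmonics, $A_\theta$ maps each $\mathcal{H}_k^d$ to itself, acting there as multiplication by the scalar $\gamma_k(\theta) = C_k^{(d-2)/2}(\cos\theta)/C_k^{(d-2)/2}(1)$, a normalized Gegenbauer polynomial evaluated at $\cos\theta$ (equivalently: $A_\theta$ commutes with $\Delta_{\mathbb{S}^{d-1}}$, hence preserves its eigenspaces, is self-adjoint, and is determined on the irreducible module $\mathcal{H}_k^d$ by evaluation on a zonal harmonic). In particular the operators $A_\theta$ pairwise commute, since they are simultaneously diagonalized by $\bigoplus_k \mathcal{H}_k^d$. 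Let $\theta_k := \arccos(t_k) \in (0,\pi/2]$, where $t_k$ is the largest zero of $C_k^{(d-2)/2}$ in $(-1,1)$; then $\gamma_k(\theta_k) = 0$, so $A_{\theta_k}$ annihilates $\mathcal{H}_k^d$.

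Set $T := \prod_{k\in S} A_{\theta_k}$ (the order is irrelevant by commutativity). On each summand $a_k f_k$ the factor $A_{\theta_k}$ occurs and kills it, and the degree-$0$ component vanishes by the mean-value hypothesis; hence $T\big(p\big|_{\mathbb{S}^{d-1}}\big) \equiv 0$. On the other hand, each $A_\theta$ replaces the value at a point by an average of values at distance exactly $\theta$ from it, so $(Tp)(x_0) = \int p\, d\nu$ for a probability measure $\nu$ supported in $\overline{B(x_0,R)}$ with $R := \sum_{k\in S}\theta_k$ (triangle inequality). If $R < r$, then $\nu$ is supported where $p > 0$ and $(Tp)(x_0) > 0$, contradicting $T(p|_{\mathbb{S}^{d-1}}) \equiv 0$. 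Therefore $r \le \sum_{k\in S}\theta_k$, and it suffices to prove $\theta_k \le \pi^2 d/k$.

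This last estimate is the only genuine computation, and I expect it to be the main obstacle. Writing $\lambda = (d-2)/2$, the function $w(\theta) = (\sin\theta)^{\lambda} C_k^{\lambda}(\cos\theta)$ solves $w'' + \bigl[(k+\lambda)^2 + \lambda(1-\lambda)\sin^{-2}\theta\bigr] w = 0$ and has the same zeros on $(0,\pi)$ as $C_k^\lambda(\cos\theta)$. For $d\in\{2,3,4\}$ one has $\lambda(1-\lambda)\ge 0$, the potential dominates $(k+\lambda)^2$, and Sturm comparison with $\sin\bigl((k+\lambda)\theta\bigr)$ gives $\theta_k \le \pi/(k+\lambda) \le \pi/k \le \pi^2 d/k$. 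For $d \ge 5$ one has $\lambda(1-\lambda) < 0$; restricting to $\theta \ge \arcsin(\lambda/(k+\lambda))$, where $\sin^{-2}\theta \le (k+\lambda)^2/\lambda^2$ and hence the potential is $\ge (k+\lambda)^2/\lambda > 0$, Sturm comparison there yields $\theta_k \le \arcsin(\lambda/(k+\lambda)) + \pi\sqrt{\lambda}/(k+\lambda) \le \tfrac{\pi}{k}\bigl(\tfrac{\lambda}{2}+\sqrt\lambda\bigr) \le \pi^2 d/k$, using $\arcsin x \le \tfrac{\pi}{2}x$; and when $k < \lambda$ the trivial bound $\theta_k \le \pi/2 \le \pi^2 d/k$ covers the remaining case. (Alternatively, one may quote classical bounds on the extreme zeros of ultraspherical polynomials in place of this Sturm argument.)
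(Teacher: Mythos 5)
Your argument is correct, and it takes a genuinely different route from the paper's. The paper proceeds by induction on $\#S$: the base case (a single $f_k\in\mathcal{H}_k^d$) is handled by a nodal-domain and domain-monotonicity argument together with the Borisov--Freitas bounds on the first Dirichlet eigenvalue of a spherical cap, and the inductive step convolves with a nonnegative bump function $g$ supported near the largest zero of $C_m^{(d-2)/2}$, chosen via the intermediate value theorem so that the Funk--Hecke multiplier $\lambda_m(g)$ vanishes; the location of that zero is imported from Driver--Jordaan \cite{driver}. You instead use the singular measures themselves: the geodesic spherical means $A_\theta$, whose multipliers are the normalized Gegenbauer values $C_k^{(d-2)/2}(\cos\theta)/C_k^{(d-2)/2}(1)$, evaluated at $\theta_k=\arccos(t_k)$ so that each factor annihilates one $\mathcal{H}_k^d$ exactly. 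Composing all of them kills $p|_{\mathbb{S}^{d-1}}$ outright (the constant component being absent by the mean-zero hypothesis), while positivity of $p$ on $B(x_0,r)$ with $r>\sum_k\theta_k$ forces $(Tp)(x_0)>0$ --- a contradiction obtained in one shot, with no induction, no base case, and hence no spectral theory of spherical caps at all. What your route buys is economy: the whole theorem reduces to the single estimate $\theta_k\lesssim d/k$ on the largest Gegenbauer zero, which you then prove from scratch by Sturm comparison on the Liouville normal form $w''+\bigl[(k+\lambda)^2+\lambda(1-\lambda)\sin^{-2}\theta\bigr]w=0$ rather than quoting the literature. What the paper's route buys is a template that also runs in settings where exact annihilating measures are less natural than approximate bump functions; it mirrors the proof of Theorem 1 on the torus, where the characteristic function of a ball plays the role of $g$.

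Two small points should be recorded to make the argument airtight. First, for $d\ge 5$ the Sturm comparison requires the interval $[\theta_*,\,\theta_*+\pi\sqrt{\lambda}/(k+\lambda)]$ with $\theta_*=\arcsin(\lambda/(k+\lambda))$ to stay inside the region where $\sin\theta\ge\lambda/(k+\lambda)$, i.e.\ $\theta_*+\pi\sqrt{\lambda}/(k+\lambda)\le\pi-\theta_*$; by $\arcsin x\le\frac{\pi}{2}x$ this reduces to $\sqrt{\lambda}\le k$, which holds in the regime $k\ge\lambda$ complementary to your trivial case $k<\lambda$, so the argument closes, but the containment must be checked. Second, for $d=2$ the ratio $C_k^{0}(\cos\theta)/C_k^{0}(1)$ is formally $0/0$ and must be interpreted as the Chebyshev value $\cos(k\theta)$ (first zero at $\pi/(2k)$), or one can simply defer $d=2$ to Theorem 1 as the paper does.
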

The ball $B(x,r)$ here refers to the set of all points on $\mathbb{S}^{d-1}$ whose (geodesic) distance from $x \in \mathbb{S}^{d-1}$ is at most $r$.
We did not optimize the constant $\pi^2 d$.  Our approach will necessarily lead to a linear growth of the constant in the dimension and this dependence could conceivably be optimal.

\subsection{Laplacian eigenfunctions.} On a compact, smooth manifold $(M,g)$ a Laplacian eigenfunction is a solution of 
$ -\Delta f = \lambda f$. A basic property of such a function is that $f$ changes sign on each ball of radius $c_M \cdot \lambda^{-1/2}$.  A natural problem is whether this can be extended to linear combinations of eigenfunctions  \cite{car, cav, donn, erem2, jer, amir, ste1, ste2, ste3}. The problem is well-understood in the one-dimensional setting where the answer follows from Sturm-Liouville theory, we refer to recent papers of B\'erard-Helffer \cite{ber0, ber}. The Laplacian eigenfunctions on $\mathbb{T}^d$ are given by the trigonometric polynomials. The eigenfunctions on $\mathbb{S}^{d-1}$ are the harmonic polynomials and
$$ \forall f \in  \mathcal{H}_k^d \qquad -\Delta_{\mathbb{S}^{d-1}} f= k(k+d-2) f.$$
Theorem 1 and Theorem 2 follow the same basic blueprint.

\begin{question} Let $(M,g)$ to be compact, smooth manifold and let $-\Delta \phi_k = \lambda_k \phi_k$ be the sequence of Laplacian eigenfunctions. Is it true, that for some $0 < c_M< \infty$ depending only on the manifold, that any finite linear combination
$$ f(x) = \sum_{k \in S} a_k \phi_k(x) \quad \mbox{has a root in each ball of radius} \quad r = c_M \sum_{k \in S} \frac{1}{\sqrt{\lambda_k}}.$$
\end{question}
We learned this question from Stefano Decio (see also \cite{decio}).
Theorem 3 proves it for global eigenfunctions on $\mathbb{R}^2$ and $\mathbb{R}^3$. This result can be seen as being similar in spirit to Theorem 1 for $d = 2,3$ while allowing for a much larger class of functions.
\begin{theorem} Let $d \in \left\{2,3\right\}$ and $n \in \mathbb{N}$. Suppose, for each $1 \leq k \leq n$, the smooth function $\phi_k: \mathbb{R}^d \rightarrow \mathbb{R}$ is a global solution of
$-\Delta \phi_k = \lambda_k \phi_k$. Then
$$ f(x) = \sum_{k =1}^n a_k \phi_k(x)$$
has a zero in every ball $B \subset \mathbb{R}^d$ with radius
$$ r = 2 \pi \sum_{k = 1}^n \frac{1}{\sqrt{\lambda_k}}.$$
\end{theorem}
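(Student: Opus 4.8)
The plan is to argue by contradiction, following the same blueprint as the proofs of Theorem 1 and Theorem 2: kill each eigenfunction $\phi_k$ separately by convolving it against a compactly supported probability measure, then convolve all of these measures together; the support of the resulting measure lies in a ball whose radius is the sum of the individual radii. Concretely, suppose the ball $B(x_0,r)$ with $r = 2\pi \sum_{k=1}^n \lambda_k^{-1/2}$ contains no zero of $f$. Since $B(x_0,r)$ is connected and $f$ is continuous, $f$ has a fixed sign there, and after possibly replacing $f$ by $-f$ we may assume $f>0$ on $B(x_0,r)$. I will exhibit a Borel probability measure $\nu$ on $\RR^d$, supported in a closed ball $\overline{B(0,R)}$ with $R<r$, such that $f*\nu\equiv 0$ on $\RR^d$. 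Evaluating at $x_0$ gives $0=(f*\nu)(x_0)=\int f(x_0-y)\,d\nu(y)$; since $\overline{B(0,R)}$ is symmetric about the origin, $x_0-\overline{B(0,R)}=\overline{B(x_0,R)}\subset B(x_0,r)$, so the integrand is strictly positive on $\operatorname{supp}\nu$, and as $\nu$ is a probability measure the integral is strictly positive --- a contradiction.

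To build $\nu$, for $\rho>0$ let $\sigma_\rho$ denote the rotation-invariant probability measure on the sphere $\partial B(0,\rho)\subset\RR^d$. The mean value property for global solutions of the Helmholtz equation states that if $-\Delta\phi=\lambda\phi$ on all of $\RR^d$ then
$$ \phi*\sigma_\rho \;=\; m_d(\sqrt{\lambda}\,\rho)\,\phi \qquad\text{on } \RR^d, \qquad m_3(t)=\frac{\sin t}{t},\quad m_2(t)=J_0(t),$$
and in general $m_d(t)=\Gamma(d/2)\,(t/2)^{1-d/2}J_{d/2-1}(t)$ with $m_d(0)=1$. This is classical: the spherical average on the left, as a function of $\rho$, is radial about the base point and solves the radial Helmholtz ODE, hence is a scalar multiple of $m_d(\sqrt{\lambda}\,\rho)$, and the multiple is $\phi$ by letting $\rho\to 0$. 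Let $z_d>0$ be the first zero of $m_d$; then $z_3=\pi$, while for $d=2$ we have $J_0(0)=1>0$ and $J_0(3)<0$, so $z_2\in(0,3)$. In both cases $z_d<2\pi$. Put $r_k=z_d/\sqrt{\lambda_k}$ and $\sigma_k:=\sigma_{r_k}$; then the mean value property reads exactly $\phi_k*\sigma_k\equiv 0$ on $\RR^d$.

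Now set $\nu=\sigma_1*\sigma_2*\cdots*\sigma_n$. This is a Borel probability measure supported in $\overline{B(0,R)}$ with $R=\sum_{k=1}^n r_k = z_d\sum_{k=1}^n\lambda_k^{-1/2} < 2\pi\sum_{k=1}^n\lambda_k^{-1/2}=r$. Because the convolution of a continuous function with finitely many compactly supported finite measures is associative and commutative (Fubini, using that $f$ is bounded on the relevant compact set), and because $\phi_k*\sigma_k\equiv 0$,
$$ f*\nu \;=\; \sum_{k=1}^n a_k\,(\phi_k*\sigma_1*\cdots*\sigma_n) \;=\; \sum_{k=1}^n a_k\,\big((\phi_k*\sigma_k)*\sigma_1*\cdots*\sigma_{k-1}*\sigma_{k+1}*\cdots*\sigma_n\big) \;=\; 0. $$
This $\nu$ has all the properties demanded in the first paragraph, which completes the contradiction and hence the proof.

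The only points that need genuine care are the mean value identity together with the (elementary) fact that the first zero $z_d$ of $m_d$ lies in $(0,2\pi)$ --- trivial for $d=3$ and a standard fact about $J_0$ for $d=2$ --- and the routine justification that convolution with the continuous, possibly unbounded, functions $\phi_k$ commutes past the compactly supported surface measures; I do not anticipate a serious obstacle here. Two remarks worth recording after the proof: the argument goes through verbatim in every dimension $d$ with $2\pi$ replaced by $j_{d/2-1,1}$, which is precisely why the clean constant is stated only for $d\in\{2,3\}$; and one needs only one factor $\sigma_k$ per \emph{distinct} value of $\lambda_k$ (combine eigenfunctions sharing an eigenvalue before killing them), so the radius may in fact be taken to be $z_d$ times the sum of $\lambda^{-1/2}$ over the distinct eigenvalues occurring --- the same type of sharpening that Theorem 1 makes over Kozma--Oravecz.
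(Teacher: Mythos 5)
Your proof is correct, but it takes a genuinely different route from the paper. The paper argues by induction on $n$: it solves the inhomogeneous wave equation $(\partial_t^2-\Delta)u=f$ with vanishing data, notes that $u(t,x)=\sum_k a_k\lambda_k^{-1}(\cos(\sqrt{\lambda_k}t)-1)\phi_k(x)$ so that the top frequency is annihilated at $t^*=2\pi/\sqrt{\lambda_n}$, and uses the explicit Kirchhoff/Poisson representation formulas --- whose kernels are \emph{nonnegative} precisely in $d=2,3$ --- to conclude that sign-definiteness of $f$ on $B(z,r)$ forces sign-definiteness of $u(t^*,\cdot)$ on $B(z,r-t^*)$. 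You instead invoke the Helmholtz mean value property $\phi*\sigma_\rho=m_d(\sqrt{\lambda}\rho)\phi$ with $m_d(t)=\Gamma(d/2)(t/2)^{1-d/2}J_{d/2-1}(t)$, choose each radius at the first zero $j_{d/2-1,1}$, and convolve all the spherical measures into a single probability measure killing $f$; this replaces the induction by one application of Fubini and replaces positivity of the wave propagator by positivity of surface measure. Your approach buys several things: it works in every dimension $d\ge 2$ with constant $j_{d/2-1,1}$ (the paper's method genuinely breaks for $d\ge 4$, where the forward fundamental solution of the wave equation is no longer a positive measure); it gives a strictly better constant even for $d\in\{2,3\}$, namely $j_{0,1}\approx 2.405$ and $\pi$ in place of $2\pi$; your closing remark about needing one spherical measure per \emph{distinct} eigenvalue is also correct and mirrors the improvement of Theorem 1 over Kozma--Oravecz; and since the mean value identity is needed only at centers within $B(x_0,R)$ and radii at most $R$, your argument in fact only uses the eigenfunction equations on the ball in question, not globally. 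What the paper's route buys instead is the byproduct recorded as Theorem 4 (the explicit weighted cancellation identity, read off from the Kirchhoff kernel $\|y-x\|^{-1}$) and the author's stated hope that finite speed of propagation could localize the argument to bounded domains. The only points in your write-up that deserve a full sentence in a final version are the derivation of the radial ODE for the spherical average (Darboux's equation) and the Fubini justification for commuting the compactly supported measures past the locally bounded $\phi_k$; both are routine, as you say.
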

 We give a proof using the closed-form solution of a linear, non-homogeneous wave equation in Euclidean space. Because of finite speed of propagation, there is some hope of a variant of it also working on a bounded domain $\Omega \subset \mathbb{R}^d$. 

\subsection{An identity for eigenfunctions.} The proof of Theorem 3 suggests an interesting identity for Laplacian eigenfunctions.
\begin{theorem} Suppose $-\Delta \phi = \lambda \phi$ in a neighborhood of $B(x,r) \subset \mathbb{R}^n$ and $n \geq 3$. Then, for an explicit universal function function $Q_n: \mathbb{R}_{\geq 0} \rightarrow \mathbb{R}$ we have
  $$ \int_{\|x-y\| \leq r} \frac{\phi(y)}{\| y-x\|^{n-2}} dy = \frac{1}{\lambda} Q_n( \sqrt{\lambda} \cdot r) \cdot \phi(x).$$
In particular, in three dimensions, $n=3$,
 $$ \int_{\|x-y\| \leq r} \frac{\phi(y)}{\| y-x\|} dy = 4 \pi \frac{1 - \cos{(\sqrt{\lambda}\cdot r)}}{\lambda} \cdot \phi(x).$$
\end{theorem}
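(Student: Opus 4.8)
The plan is to reduce the identity to the classical spherical mean value property for solutions of the Helmholtz equation. Passing to polar coordinates centered at $x$,
$$ \int_{\|x-y\| \leq r} \frac{\phi(y)}{\|y-x\|^{n-2}}\, dy = \int_0^r s^{-(n-2)} \left( \int_{\partial B(x,s)} \phi \, d\sigma \right) ds, $$
so everything reduces to understanding the spherical average $g(s) = \fint_{\partial B(x,s)} \phi \, d\sigma$. First I would show that $g$ satisfies
$$ g''(s) + \frac{n-1}{s}\, g'(s) + \lambda \, g(s) = 0, \qquad g(0) = \phi(x), $$
by averaging $-\Delta \phi = \lambda \phi$ over the ball $B(x,s)$ and applying the divergence theorem, exactly as in the standard derivation of the mean value property of harmonic functions (one first gets $g'(s) = -\lambda\, \omega_{n-1}^{-1} s^{1-n} \int_{B(x,s)} \phi\, dy$, and then differentiates once more). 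Alternatively this follows from Pizzetti's formula together with $\Delta^m \phi = (-\lambda)^m \phi$. Among the solutions of this Bessel-type equation that are bounded at the origin, the normalized one is $g(s) = \phi(x) \cdot M_n(\sqrt{\lambda}\, s)$, where
$$ M_n(t) = 2^{(n-2)/2}\, \Gamma\!\left(\frac{n}{2}\right) t^{-(n-2)/2} J_{(n-2)/2}(t) $$
is the normalized Bessel factor with $M_n(0) = 1$ and $J_\nu$ the Bessel function of the first kind.

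Next I would substitute this back. Since $\int_{\partial B(x,s)} \phi \, d\sigma = \omega_{n-1} s^{n-1} g(s)$ with $\omega_{n-1} = |\mathbb{S}^{n-1}|$, the weight $s^{-(n-2)}$ cancels all but one power of $s$, and
$$ \int_{\|x-y\| \leq r} \frac{\phi(y)}{\|y-x\|^{n-2}}\, dy = \omega_{n-1}\, \phi(x) \int_0^r s \, M_n(\sqrt{\lambda}\, s)\, ds. $$
The substitution $t = \sqrt{\lambda}\, s$ rescales the right-hand side to $\lambda^{-1}\, \phi(x)\, \omega_{n-1} \int_0^{\sqrt{\lambda}\, r} t\, M_n(t)\, dt$, which is the asserted identity with the explicit universal function $Q_n(u) = \omega_{n-1} \int_0^u t\, M_n(t)\, dt$. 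For $n = 3$ one has $J_{1/2}(t) = \sqrt{2/(\pi t)}\, \sin t$, hence $M_3(t) = \sin t / t$ and $t\, M_3(t) = \sin t$, which integrates to $1 - \cos u$; together with $\omega_2 = 4\pi$ this gives $Q_3(u) = 4\pi(1 - \cos u)$ and the stated three-dimensional formula.

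I do not expect a deep obstacle: the entire point is the clean interaction between the Newtonian-type weight $\|x-y\|^{2-n}$ and the eigenfunction mean value property, after which the claim is a one-variable computation. The step most likely to need care is the rigorous justification of the ODE for $g$ on all of $[0,r]$ — this is precisely where the hypothesis that $\phi$ solves $-\Delta\phi = \lambda\phi$ on a \emph{neighborhood} of $\overline{B(x,r)}$ is used (so that, for instance, Pizzetti's expansion converges there, or so that the divergence theorem argument applies up to $s = r$) — together with careful bookkeeping of the Bessel normalization constants. If a more closed form for $Q_n$ in higher dimensions were wanted, one could iterate the recurrence $\frac{d}{dt}\big[t^{-\nu} J_\nu(t)\big] = -t^{-\nu} J_{\nu+1}(t)$, but this is not needed for the statement as given.
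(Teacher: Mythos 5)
Your proposal is correct, and it reaches the paper's exact formula (your $Q_n(u)=|\mathbb{S}^{n-1}|\int_0^u t\,M_n(t)\,dt$ agrees with the paper's $2^{(n-2)/2}\Gamma(n/2)\,n\omega_n\int_0^u s^{(4-n)/2}J_{(n-2)/2}(s)\,ds$ since $n\omega_n=|\mathbb{S}^{n-1}|$), but it is organized quite differently from the paper's argument. The paper works with the weighted integral $Q(r)=\int_{B(x,r)}\phi(y)\|x-y\|^{2-n}dy$ itself as the unknown: it computes $Q(0)=Q'(0)=0$, $Q''(0)=n\omega_n\phi(x)$, derives a third-order integro-differential equation for $Q$ by repeated differentiation of the Green-identity relation for the spherical average, and only then lands on the second-order Bessel equation for $R=Q'$. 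You instead solve for the spherical average $g(s)$ first — the classical mean value property for Helmholtz solutions, $g(s)=\phi(x)M_n(\sqrt{\lambda}s)$ with $M_n$ the normalized Bessel factor — and then the claimed identity is a single explicit one-variable integral, since the weight $s^{2-n}$ cancels the surface-area factor $s^{n-1}$ down to $s\,g(s)$. The two routes are equivalent at bottom (the paper's equation $r^2R''+(n-3)rR'-(n-3)R+\lambda r^2R=0$ for $R=Q'=n\omega_n r\,g(r)$ transforms under $R=n\omega_n rg$ into exactly your radial Helmholtz equation $g''+\tfrac{n-1}{s}g'+\lambda g=0$), but yours is shorter and avoids the third-order ODE and the case split at $n=3$ versus $n\geq 4$ entirely, at the cost of invoking (or first proving) the eigenfunction mean value property as a separate known fact. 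Your identification of the bounded solution, the normalization $M_n(0)=1$ via $\Gamma(n/2)$, and the $n=3$ specialization $M_3(t)=\sin t/t$ are all correct.
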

The statement is purely local and does not depend on any boundary conditions which might make it useful in the study of the behavior of eigenfunctions. Moreover,
the function $Q_n$ is completely explicit and can be written as 
$$ Q_n(x) = 2^{\frac{n-2}{2}} \Gamma(n/2) n \omega_n \int_0^x  s^{\frac{4-n}{2}} J_{\frac{n-2}{2}}(s) ds.$$
When $n=3$, we get $J_{1/2}(x) = \sqrt{2/\pi}  x^{-1/2} \sin{(x)}$ and the expression simplifies.
 An interesting consequence, valid in all dimensions as long as $B(x,r) \subset \Omega$,
$$ \mbox{if}~\phi(x) = 0, ~\mbox{then} \qquad \int_{\|x-y\| \leq r} \frac{\phi(y)}{\| y-x\|^{n-2}} dy = 0$$
which says that mass around a root is perfectly balanced with respect to $\|x-y\|^{2-n}$.\\
Another interesting consequence is with respect to the distribution of roots: for example, any eigenfunction on $\Omega \subset \mathbb{R}^3$ with Dirichlet boundary conditions has a root in each $\pi \lambda^{-1/2}$ ball intersecting the domain and this is the sharp constant. As a consequence of Theorem 4, we see that on a ball twice that size
$$ \int_{\|x-y\| \leq 2\pi\lambda^{-1/2}} \frac{\phi(y)}{\| y-x\|} dy = 0$$
which is a way of saying that there is a precise balance between positive and negative mass on each ball of radius $2 \pi \lambda^{-1/2}$ with respect to the Coulomb kernel. This is also true (with the smallest positive root of $Q_n$ as constant) in higher dimensions.

\section{Proof of Theorem 1}

\begin{lemma}
The smallest positive root zero of the Bessel function $J_{d/2-1}$ satisfies
$$\forall~ d\geq 2 \qquad  j_{\frac{d}{2} -1, 1} \leq \frac{j_{0,1}}{2} d.$$
\end{lemma}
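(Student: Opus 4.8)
The plan is to convert the statement into an explicit upper bound for the first Bessel zero and then into an elementary inequality in $d$. Put $\nu = \tfrac{d}{2}-1$, so that $\nu \geq 0$ and $\tfrac{j_{0,1}}{2}d = j_{0,1}(\nu+1)$; the assertion is then exactly $j_{\nu,1} \leq j_{0,1}(\nu+1)$, equivalently $j_{\nu,1}^2 \leq j_{0,1}^2(\nu+1)^2$. For $d=2$, i.e. $\nu=0$, this is the equality $j_{0,1}=j_{0,1}$ and there is nothing to prove; so from here I would assume $d \geq 3$, i.e. $\nu \geq \tfrac12$.

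For the upper bound on $j_{\nu,1}$ I would use Rayleigh's power sums of reciprocal Bessel zeros. Starting from the Hadamard product
$$ J_\nu(z) = \frac{(z/2)^\nu}{\Gamma(\nu+1)} \prod_{k \geq 1}\left(1 - \frac{z^2}{j_{\nu,k}^2}\right),$$
one takes logarithms and compares with the Taylor expansion of $J_\nu$ to read off, writing $\sigma_m(\nu) := \sum_{k\geq 1} j_{\nu,k}^{-2m}$,
$$ \sigma_2(\nu) = \frac{1}{16(\nu+1)^2(\nu+2)}, \qquad \sigma_3(\nu) = \frac{1}{32(\nu+1)^3(\nu+2)(\nu+3)}$$
(these are classical; they go back to Rayleigh and are in Watson's treatise on Bessel functions). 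Since the positive zeros satisfy $j_{\nu,1} < j_{\nu,2} < \cdots$, the number $a_1 := j_{\nu,1}^{-2}$ is the largest of the positive numbers $a_k := j_{\nu,k}^{-2}$, whence $a_k^3 \leq a_1 a_k^2$ for every $k$ and so $\sigma_3(\nu) = \sum_k a_k^3 \leq a_1 \sum_k a_k^2 = a_1\,\sigma_2(\nu)$. Rearranging,
$$ j_{\nu,1}^2 \;\leq\; \frac{\sigma_2(\nu)}{\sigma_3(\nu)} \;=\; 2(\nu+1)(\nu+3).$$

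Substituting $\nu = \tfrac{d}{2}-1$ gives $j_{d/2-1,1}^2 \leq \tfrac{d^2}{2} + 2d$. Dividing the target inequality $j_{d/2-1,1}^2 \leq \big(\tfrac{j_{0,1}}{2}\big)^2 d^2$ through by $d^2$, it therefore suffices to check $\tfrac12 + \tfrac{2}{d} \leq \tfrac{j_{0,1}^2}{4}$; since $j_{0,1} > 2.4$ we have $\tfrac{j_{0,1}^2}{4} > 1.44$, while $\tfrac12 + \tfrac{2}{d} \leq \tfrac12 + \tfrac23 = \tfrac76 < 1.44$ for every $d \geq 3$. Combined with the equality at $d=2$, this proves the lemma for all $d \geq 2$.

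There is no real obstacle here: the only input that is not entirely routine is recalling (or rederiving, by the $\log$-expansion above) the Rayleigh sum $\sigma_3$. If one prefers to avoid it, the weaker bound $j_{\nu,1}^2 \leq \sigma_1(\nu)/\sigma_2(\nu) = 4(\nu+1)(\nu+2)$ obtained from $\sigma_2 \leq a_1\sigma_1$ already gives the claim for all $d \geq 5$, leaving only $d = 3,4$ to be verified directly from $j_{1/2,1} = \pi$ and $j_{1,1} = 3.8317\ldots$. In any case, the mild near-sharpness of the statement is concentrated entirely at $d = 2$, where equality is built in; for $d \geq 3$ there is a comfortable multiplicative gap, so essentially any explicit linear-in-$d$ upper bound for $j_{d/2-1,1}$ with the correct leading order would close the argument.
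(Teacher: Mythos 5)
Your proof is correct, and it takes a genuinely different route from the paper. The paper's argument is explicitly only a sketch: it invokes Tricomi's 1949 asymptotic $j_{d/2-1,1}=\tfrac{d}{2}+\alpha d^{1/3}+\mathcal{O}(d^{-1/3})$ and then asserts, after ``checking the first few values,'' that $j_{d/2-1,1}/d$ decays monotonically to $1/2$, so that the maximum over $d\ge 2$ is attained at $d=2$. That leaves the monotonicity unproved and the finite-versus-asymptotic regime unquantified. Your Euler--Rayleigh argument closes exactly this gap: from the Hadamard product one reads off $\sigma_2(\nu)=\frac{1}{16(\nu+1)^2(\nu+2)}$ and $\sigma_3(\nu)=\frac{1}{32(\nu+1)^3(\nu+2)(\nu+3)}$, the elementary inequality $\sigma_3\le j_{\nu,1}^{-2}\sigma_2$ gives the classical bound $j_{\nu,1}^2\le 2(\nu+1)(\nu+3)$, and with $\nu=\tfrac{d}{2}-1$ this reduces the lemma to $\tfrac12+\tfrac{2}{d}\le j_{0,1}^2/4\approx 1.4458$, which holds for all $d\ge 3$ (with $d=2$ being equality by definition). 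I checked the arithmetic: $\sigma_2/\sigma_3=2(\nu+1)(\nu+3)=\tfrac{d^2}{2}+2d$, and $\tfrac12+\tfrac23=\tfrac76<1.44$. Your fallback via $\sigma_1/\sigma_2=4(\nu+1)(\nu+2)$ plus the explicit values $j_{1/2,1}=\pi$ and $j_{1,1}\approx 3.83$ for $d=3,4$ is also sound. What your approach buys is a short, fully rigorous, self-contained proof with no appeal to asymptotics or numerics beyond the single constant $j_{0,1}>2.4$; what the paper's sketch buys is brevity and a pointer to the sharp large-$d$ behavior, which your bound $j_{\nu,1}\le\sqrt{2(\nu+1)(\nu+3)}\sim \nu\sqrt{2}$ does not capture (it loses a factor $\sqrt{2}$ asymptotically), though that is irrelevant for the stated inequality.
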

\begin{proof}[Sketch]  Asymptotics of roots of the Bessel function are a classical subject. In our setting, an old 1949 result of Tricomi \cite{tricomi} implies that, for some $\alpha \in \mathbb{R}$ as $d \rightarrow \infty$
$$  j_{\frac{d}{2} -1, 1} = \frac{d}{2} + \alpha d^{1/3} + \mathcal{O}(d^{-1/3}).$$
Checking the first few values of $d$, we see that $j_{d/2-1,1}/d$ is maximal when $d=1$ (and then steadily decaying towards its limit $1/2$). In
the case of $d=1$, there is an explicit closed form expression (being $\pi/2$). Since our result is implied by the result of Koksma-Oravecz when
$d=1$, we are only interested in $d\geq 2$. The largest value is assumed when $d=2$ corresponding to $j_{0,1} \sim 2.404\dots$.
\end{proof}

\begin{proof}[Proof of Theorem 1] We assume $d \geq 2$.
The proof is by induction on $\# \Lambda$. When $\# \Lambda = 1$, then $\Lambda = \left\{ \lambda \right\}$ and $f$ is a Laplacian eigenfunction $-\Delta f = 4 \pi^2 \lambda^2$ and we deduce the existence of a sign change in every ball of radius $d^{3/2} \cdot \lambda^{-1}$ as follows.\\
\textit{The case $\# \Lambda = 1$.} Suppose, without loss of generality, that $f > 0$ on the ball $B(x_0, r)$ where $x_0 = (1/2,1/2,\dots, 1/2)$. 
We consider the largest connected domain $B(x_0, r) \subset \Omega \subset \mathbb{T}^d$ containing $x_0$ on which $f$ is positive. It is a classical fact that an eigenfunction restricted to a nodal domain $\Omega$ is a multiple of the first nontrivial eigenfunction with Dirichlet boundary conditions on that domain (see \cite{band, courant}). This means that, restricting the function $f$ to its nodal domain $\Omega$, we arrive at
$$ 4 \pi^2 \lambda^2 = \frac{\int_{\Omega} |\nabla f|^2 dx}{\int_{\Omega} f^2~dx} = \lambda_1(\Omega) = \inf_{g:\Omega \rightarrow \mathbb{R} \atop g |_{\partial \Omega} = 0} \frac{\int_{\Omega} |\nabla g|^2 dx}{\int_{\Omega} g^2~dx}.$$
Domain monotonicity implies that the Laplacian eigenvalue increases when we restrict to a smaller sub-domain. This could also be seen from the variational characterization since the space of functions vanishing at the boundary becomes strictly smaller when restricting to a subset. Since $B(x_0, r) \subset \Omega$, we have
$$  4 \pi^2 \lambda^2 = \lambda_1(\Omega) \leq \lambda_1(B(x_0, r)).$$

We now distinguish two cases: if $r > 1/2$, then trivially $B(x_0, 1/2) \subset B(x_0, r)$. In that case we can simply treat $B(x_0, 1/2) \subset [0,1]^d$ as
a subset of Euclidean space. Finding a function with a small Rayleigh-Ritz quotient on $B(x_0, 1/2)$ (vanishing at the boundary) 
is strictly harder than finding such a function on $\Omega$ (because each of the former is also an example for the latter). The first problem, however,
can be solved in closed form.
 In Euclidean space $\mathbb{R}^d$ we have
$$  \lambda_1(B(x_0, r)) = r^{-2} j_{\frac{d}{2}-1, 1}^2,$$
where $j_{d/2-1, 1} > 0$ is the smallest positive zero of the Bessel function of index $d/2-1$.
If $r > 1/2$, then, using the Lemma, we deduce
$$   4 \pi^2 \lambda^2= \lambda_1(\Omega) \leq 4 j_{\frac{d}{2}-1, 1}^2 \leq j_{0,1}^2 d^2$$
and thus
$$ 1 \leq \frac{j_{0,1} d}{2 \pi \lambda}.$$
In that case, we also conclude that, since $f$ has mean value 0 and vanishes \textit{somewhere} that $r \leq \sqrt{d}/4 = \mbox{diam}(\mathbb{T}^d)/2$ is certainly an admissible (albeit trivial) inequality.
We deduce that
$$ r \leq \frac{\sqrt{d}}{4} \leq \frac{\sqrt{d}}{4} \frac{j_{0,1}}{2\pi} \frac{d}{\lambda} \leq  \frac{d^{\frac{3}{2}}}{\lambda}.$$
If $r<1/2$, then, from a direct comparison with the Euclidean setting,
$$4 \pi^2 \lambda^2 = \lambda_1(\Omega) \leq  r^{-2}  \lambda_1(B) = r^{-2} j_{\frac{d}{2}-1, 1}^2.$$
 Appealing to the Lemma,
$$  r \leq \frac{j_{d/2-1,1}}{2\pi} \frac{1}{\lambda} \leq \frac{j_{0,1} }{4 \pi} \frac{d}{\lambda} \leq \frac{1}{2} \frac{d}{\lambda} \leq   \frac{d^{\frac{3}{2}}}{\lambda}.$$

\textit{The case $\# \Lambda \geq 2$.} 
Let us now suppose $\# \Lambda \geq 2$ and that the set $\Lambda$ is given by $\lambda_1 < \lambda_2 < \dots < \lambda_n$.  Suppose now that there exists a function $f:\mathbb{T}^d \rightarrow \mathbb{R}$ supported on these frequencies such that, for some ball $B$ of radius $r(f)$, we have, without loss of generality, that $f > 0$. Our goal will be to transform $f$ into a function supported on the frequencies $\lambda_1, \lambda_2, \dots, \lambda_{n-1}$ which is positive on a ball of not much smaller radius which then implies the result via induction (note that this overall structure is the same as in \cite{kozma}). It remains to explain the construction. We consider a new function $g_{\delta}:\mathbb{R}^d \rightarrow \mathbb{R}$
$$ g_{\delta}(x) = \chi_{\|x\| \leq \delta}$$
which we can identify with the periodic function $h_{\delta}:\mathbb{T}^d \rightarrow \mathbb{R}$ via
$$ h_{\delta}(x) = \sum_{k \in \mathbb{Z}^d} g_{\delta}(x + k).$$

 The function $g$ is the characteristic function of a ball of radius $\delta$ centered at the origin.
 There is an explicit formula for the Fourier coefficients of $g$ and
$$ \widehat{g_{\delta}}(\xi) = \alpha_d \frac{J_{d/2}(2 \pi \|\xi\| \delta)}{\| 2\pi \xi \delta\|^{d/2}},$$
where $\alpha_d$ is some constant depending only on $d$ and $J_{d/2}$ is the Bessel function of order $d/2$. The same formula holds for the Fourier coefficient of $h$ and
$$\forall k \in \mathbb{Z}^d \qquad \widehat{h_{\delta}}(k) = \alpha_d \frac{J_{d/2}(2 \pi \|k\| \delta)}{\| 2\pi k \delta\|^{d/2}},$$

Let now $j_{d/2,1} > 0$ to be the smallest positive root of $J_{d/2}$, i.e. $ J_{d/2}(j_{d/2,1}) = 0.$
Then, choosing
$$ \delta^* = \frac{j_{d/2,1}}{2 \pi \lambda_n}$$
implies that the Fourier transform $\widehat{g_{\delta^*}}$ vanishes on all lattice points of size $\|k\| = \lambda_n$. We now consider the
convolution
$$  (f* h_{\delta^*})(x) = \int_{\mathbb{T}^d} f(x-y) h_{\delta^*}(y) ~dy.$$
Convolution becomes multiplication on the Fourier side and thus if
$$ f(x) = \sum_{k \in S} a_{k} \exp\left(2 \pi i \left\langle x, k \right\rangle\right),$$
then
$$ (f* h_{\delta^*})(x) =  \sum_{k \in S} a_{k} \cdot \widehat{h_{\delta^*}}(k) \cdot \exp\left(2 \pi i \left\langle x, k \right\rangle\right).$$
\begin{center}
\begin{figure}[h!]
\begin{tikzpicture}
\draw [thick] (0,0) circle (1cm);
\node at (0,0) {$f > 0$};
\draw [thick] (3,0) circle (1cm);
\draw [thick,dashed] (3.3, 0.4) circle (0.4cm);
\filldraw (3.3, 0.4) circle (0.05cm);
\draw [dashed] (3, 0) circle (0.6cm);
\draw [thick] (6,0) circle (0.6cm);
\node at (6,-1) {$f*h_{\delta} > 0$};
\end{tikzpicture}
\caption{Induction step: if $f>0$ on a ball of radius $r$ and we convolve $f$ with a positive function supported on a ball of radius $\delta^*$, then the convolution is positive on a ball of radius $r - \delta^*$.}
\end{figure}
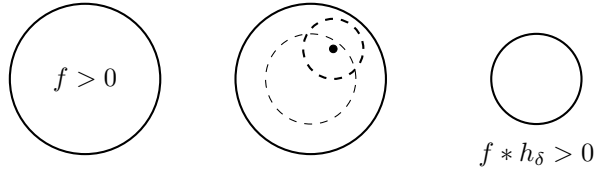
\end{center}
\vspace{-10pt}
$f* h_{\delta^*}$ is a trigonometric polynomial with frequencies in $\lambda_1 <  \dots < \lambda_{n-1}$. Simultaneously, if for some ball $B(x_0, r(f))$ of radius $r(f)$, we have that $f > 0$, then surely $f* h_{\delta^*}$ has the same property on the ball $B(x_0, r(f) - \delta^*)$. We deduce 
$$ r(f) \leq r(f* h_{\delta^*}) + \delta^*.$$
Using the Lemma once more, we arrive
$$ \delta^*  = \frac{j_{0,1} (d+2)}{4 \pi \lambda_n} =\frac{j_{0,1} }{4 \pi \lambda_n} \frac{d+2}{d} d \leq \frac{j_{0,1} }{2 \pi } \frac{d}{\lambda_n} \leq \frac{1}{2} \frac{d}{\lambda_n} \leq \frac{d^{3/2}}{\lambda_n}.$$
\end{proof}

\textbf{Question.} The only time the scaling $d^{3/2}$ appears is when setting up the induction case. This leads to a natural question: if $-\Delta f = \lambda f$ is an eigenfunction (a sum of trigonometric terms corresponding to the same frequency) on $\mathbb{T}^d \cong [0,1]^d$, is there a root in each ball of radius $r = 100d \cdot \lambda^{-1/2}$?

\section{Proof of Theorem 2}
 We start by noting that it suffices to prove the result for $d \geq 3$. The case $d=2$ follows from Theorem 1 since $\mathbb{S}^{d-1} = \mathbb{S}^1 \equiv \mathbb{T}$ and everything reduces to cosines.  
An important new ingredient is the Funk-Hecke formula: it describes the effect of convolution on the sphere in terms of having a multiplicative effect on spherical harmonics.
We refer to the exposition in Dai-Xu \cite{dai} for additional details.
\begin{lemma}[Funk-Hecke Formula] If $g:[-1,1] \rightarrow \mathbb{R}$ is integrable and
$$ \int_{-1}^{1} |g(t)| (1-t^2)^{\frac{d-3}{2}} dt < \infty,$$
 then
for every $q(x) \in  \mathcal{H}_k^d$ we have
$$ \int_{\mathbb{S}^{d-1}} g(\left\langle x, y\right\rangle) q(y) d\sigma(y) = \lambda_k(f) \cdot q(x),$$
where, $C_n^{\lambda}$ denoting the Gegenbauer polynomials,
$$ \lambda_k(g) = \frac{\omega_{d-1}}{C_k^{\frac{d-2}{2}}(1)} \int_{-1}^{1} g(t) \cdot C_k^{\frac{d-2}{2}}(t) \cdot (1-t^2)^{\frac{d-3}{2}} dt,$$ 
\end{lemma}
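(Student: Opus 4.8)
The plan is to deduce the formula from the rotational symmetry of the averaging operator together with Schur's lemma. Fix a unit vector $x \in \mathbb{S}^{d-1}$ and, for $g$ satisfying the stated weight condition, set
$$ (T_g q)(x) = \int_{\mathbb{S}^{d-1}} g(\langle x, y\rangle)\, q(y)\, d\sigma(y). $$
The hypothesis $\int_{-1}^{1} |g(t)| (1-t^2)^{\frac{d-3}{2}} dt < \infty$ is exactly what makes this integral absolutely convergent and $T_g q$ continuous in $x$: parametrizing $y = t\, x + \sqrt{1-t^2}\,\omega$ with $t \in [-1,1]$ and $\omega$ in the equatorial sphere $x^{\perp} \cap \mathbb{S}^{d-1} \cong \mathbb{S}^{d-2}$ one has $d\sigma(y) = (1-t^2)^{\frac{d-3}{2}}\, dt\, d\sigma_{d-2}(\omega)$, and $q$ is bounded on the compact sphere. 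For any $R \in O(d)$ the substitution $y \mapsto Ry$, the identity $\langle Rx, Ry\rangle = \langle x, y\rangle$ and the rotation-invariance of $\sigma$ yield $T_g(q \circ R^{-1}) = (T_g q)\circ R^{-1}$, so $T_g$ commutes with the $O(d)$-action. Since $\mathcal{H}_k^d$ is an $O(d)$-invariant subspace (rotations preserve homogeneity, harmonicity and degree) and is an irreducible $O(d)$-representation, Schur's lemma forces $T_g$ to act on $\mathcal{H}_k^d$ as a single scalar $\lambda_k(g)$, which is the asserted eigenrelation.

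It remains to identify the scalar, which one does by testing against a convenient element of $\mathcal{H}_k^d$. Fix a pole $e \in \mathbb{S}^{d-1}$ and let $q_0(y) = C_k^{\frac{d-2}{2}}(\langle e, y\rangle)$; this zonal function is a spherical harmonic of degree $k$, so $T_g q_0 = \lambda_k(g)\, q_0$. Evaluating both sides at $x = e$ and using the disintegration of $d\sigma$ above, the left-hand side becomes $\big(\text{area of }\mathbb{S}^{d-2}\big) \int_{-1}^{1} g(t)\, C_k^{\frac{d-2}{2}}(t)\, (1-t^2)^{\frac{d-3}{2}}\, dt$, while the right-hand side is $\lambda_k(g)\, C_k^{\frac{d-2}{2}}(1)$. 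Solving for $\lambda_k(g)$ produces the stated formula; the only mildly delicate point is to track the normalizing sphere-measure constants so that they match the $\omega_{d-1}$ appearing in Dai--Xu \cite{dai}.

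The only genuinely non-elementary ingredient here is the representation-theoretic fact that $\mathcal{H}_k^d$ is irreducible under $O(d)$. If one prefers to avoid it, there is a self-contained alternative: expand $g(t) = \sum_{m \geq 0} c_m\, C_m^{\frac{d-2}{2}}(t)$ in Gegenbauer polynomials, insert the addition theorem expressing $C_m^{\frac{d-2}{2}}(\langle x, y\rangle)$ as a constant times $\sum_{j} Y_{m,j}(x)\, \overline{Y_{m,j}(y)}$, and apply orthogonality of spherical harmonics of distinct degrees to collapse the integral to the single surviving term $m = k$; bookkeeping the constants then yields the same $\lambda_k(g)$. Complete details of either argument may be found in \cite{dai}; the case $d = 2$ is not needed here, being subsumed by the trigonometric setting as noted at the start of this section.
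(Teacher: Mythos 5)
The paper does not prove this lemma at all: it is quoted as a known result with a pointer to Dai--Xu, so there is no internal argument to compare against. Your proposal is essentially the standard textbook proof of the Funk--Hecke formula (and is, in outline, the proof one finds in the cited reference): rotation-invariance of the kernel, irreducibility of $\mathcal{H}_k^d$, identification of the scalar by testing on the zonal harmonic $y \mapsto C_k^{\frac{d-2}{2}}(\langle e, y\rangle)$ and evaluating at the pole. The structure is sound and the constant bookkeeping you describe does produce the stated $\lambda_k(g)$ with $\omega_{d-1}$ the surface area of $\mathbb{S}^{d-2}$.

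Two small points deserve care if you were to write this out in full. First, Schur's lemma over $\mathbb{R}$ only says the commutant of an irreducible real representation is a division algebra ($\mathbb{R}$, $\mathbb{C}$ or $\mathbb{H}$), so "irreducible $\Rightarrow$ scalar" is not automatic; you should either invoke that $\mathcal{H}_k^d$ is absolutely irreducible (of real type), or -- more cheaply -- observe that $T_g$ is symmetric with respect to the $L^2(\sigma)$ inner product since its kernel $g(\langle x,y\rangle)$ is symmetric, so its eigenspaces inside $\mathcal{H}_k^d$ are $O(d)$-invariant and irreducibility forces a single eigenvalue. Second, in your alternative route the Gegenbauer expansion of a merely integrable $g$ (with the stated weight condition) need not converge pointwise or in $L^2$; one should first prove the identity for polynomial $g$ and then pass to the limit using the absolute-convergence hypothesis, or simply stick with the Schur argument, which needs no expansion of $g$. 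Neither issue is a real obstruction, and your correct observation that $d=2$ is not needed here matches the paper's reduction of that case to Theorem 1.
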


\begin{proof}[Proof of Theorem 2]
We prove the result with induction on $\# S$. \\
\textit{The case $\# S = 1$.} We start with the case where $\# S = 1$ which corresponds to it being a single harmonic polynomial $f \in \mathcal{H}^d_k$. We will show that in that case there
is a root in each ball of radius
$$ r \leq 2\pi \frac{d}{k}.$$

Let us assume $f \in \mathcal{H}^d_k$ and let us assume it is positive on the (geodesic) ball $B(x_0, r) \subset \mathbb{S}^{d-1}$ and then consider the associated nodal set $B(x_0, r) \subset \Omega$. The same argument as in the proof of Theorem 1 implies
$$ k^2 \leq k (k+d-2) = \lambda_1(\Omega) \leq \lambda_1(B(x,r)).$$
$B(x,r)$ is a $(d-1)$-dimensional manifold with boundary, a spherical cap, and we are interested in the ground state of the Laplace-Beltrami operator on such a spherical cap. This problem has been considered by Borisov-Freitas \cite{bor} who prove
$$ \lambda_1(B(x,r)) \leq 
\begin{cases}
\frac{j_{0,1}^2}{r^2} + \frac{1}{3} \qquad &\mbox{on}~\mathbb{S}^2 \\
\frac{\pi^2}{r^2} + 1 \qquad &\mbox{on}~\mathbb{S}^3\\
\frac{j_{(d-2)/2,1}^2}{r^2} - \frac{(d-1)^2}{4} + \frac{(d-1)(d-3)}{4} \left[ \frac{1}{s(r)^2} - \frac{1}{r^2} \right] \quad &\mbox{on}~\mathbb{S}^{d}, d \geq 4,
\end{cases}
$$
where $s(r) = \sin{r}$. Since $0 \leq r \leq \pi$, we can bound the first two terms from above by $2 \pi^2/r^2$. This means that in dimension $d \in \left\{2,3\right\}$, we have
 $$ 1 \leq k^2 \leq k (k+d-2) = \lambda_1(\Omega) \leq \lambda_1(B(x,r)) \leq \frac{2\pi^2}{r^2}$$
 and thus
 $$ r \leq \frac{2\pi}{k} \leq 2\pi \frac{d}{k}.$$
It remains to deal with the case $d \geq 4$.
A little bit of computation shows that either
$$  - \frac{(d-1)^2}{4} + \frac{(d-1)(d-3)}{4} \left[ \frac{1}{s(r)^2} - \frac{1}{r^2} \right] \leq 0 \qquad \mbox{or} \qquad r \geq 2.$$
Using again domain monotonicity and the fact that these spherical caps get bigger as $r$ increases, we conclude that the eigenvalue has to be monotonically decreasing in $r$ and we can thus improve the third upper bound, for $d \geq 4$, to
$$ \lambda_1(B(x,r)) \leq \max\left\{ \frac{j_{(d-2)/2,1}^2}{r^2}, \frac{j_{(d-2)/2,1}^2}{4} \right\}.$$ 
Using Lemma 1, this can be further simplified to
$$ \lambda_1(B(x,r)) \leq  \frac{j_{0,1}^2 \cdot d^2}{4} \max\left\{ \frac{1}{r^2}, \frac{1}{4} \right\} \leq \frac{3d^2}{2}\max\left\{ \frac{1}{r^2}, \frac{1}{4} \right\}.$$ 
Thus, combining the previous argument, we arrive at
$$ k^2 \leq \lambda_1(B(x,r)) \leq \frac{3d^2}{2}\max\left\{ \frac{1}{r^2}, \frac{1}{4} \right\}.$$ 
We distinguish two cases: if $r \geq 2$, then
$$ k^2 \leq \frac{3 d^2}{8} \qquad \mbox{then} \qquad \frac{d}{k} \geq \sqrt{\frac{3}{8}} \geq \frac{3}{5}$$
and then
$$ r \leq \pi \leq 2\pi \frac{3}{5} \leq 2\pi \frac{d}{k}.$$
If $r \leq 2$, then we deduce
$$ r \leq \sqrt{\frac{3}{2}} \frac{d}{k} \leq 2\pi \frac{d}{k}$$
which establishes the desired result.\\

\textit{The case $\# S \geq 2$.} 
Let us now assume that 
$$ f(x)  = \sum_{k \in S}^{} a_k f_k(x)\qquad \qquad \mbox{where} \quad f_k \in  \mathcal{H}_k^d,$$
is given and that $\#S \geq 2$ with $\max S = m$. We consider, for a suitable function $g:[-1,1] \rightarrow \mathbb{R}$ that remains to be constructed, the new function
$$ f^*(x) =  \int_{\mathbb{S}^{d-1}} g(\left\langle x, y\right\rangle) f(y) d\sigma(y).$$
The Funk-Hecke formula shows that
\begin{align*}
 f^*(x) &=  \int_{\mathbb{S}^{d-1}} g(\left\langle x, y\right\rangle)  \sum_{k \in S}^{} a_k f_k(y) d\sigma(y) \\
 &=  \sum_{k \in S}^{} a_k  \int_{\mathbb{S}^{d-1}} g(\left\langle x, y\right\rangle) f_k(y) d\sigma(y) = \sum_{k \in S}^{} a_k \lambda_k(g) f_k(x).
\end{align*}
Motivated by the proof of Theorem 1, it makes sense to design $g$ in such a way that its support is as close as possible to $1$ while simultaneously
satisfying $\lambda_m(g) = 0$. Recalling that, for some constant $\alpha_{d,m} \in \mathbb{R}$
$$ \lambda_m(g) = \alpha_{d,m} \int_{-1}^{1} g(t) \cdot C_m^{\frac{d-2}{2}}(t) \cdot (1-t^2)^{\frac{d-3}{2}} dt,$$
there is a particularly canonical choice: if we define $g$ to be a bump function suitably localized around the largest root of the Gegenbauer polynomial, this is
guaranteed to lead to a function that is compactly supported with support close to 1 and $\lambda_m(g) = 0$. A result of Driver-Jordaan \cite{driver} (see also Nikolov \cite{nikolov}) shows that the largest root of $C_m^{\lambda}(x)$ satisfies
$$ x_1 > 1 - \frac{(\lambda + 3)^2}{m^2}.$$
The bounds in \cite{driver, nikolov} are slightly stronger than that (at the level of constants), we have chosen a slightly algebraically easier form for simplicity of exposition.

\begin{figure}[h!]
\begin{center}
\begin{tikzpicture}
\node at (0,0) {\includegraphics[width=0.4\textwidth]{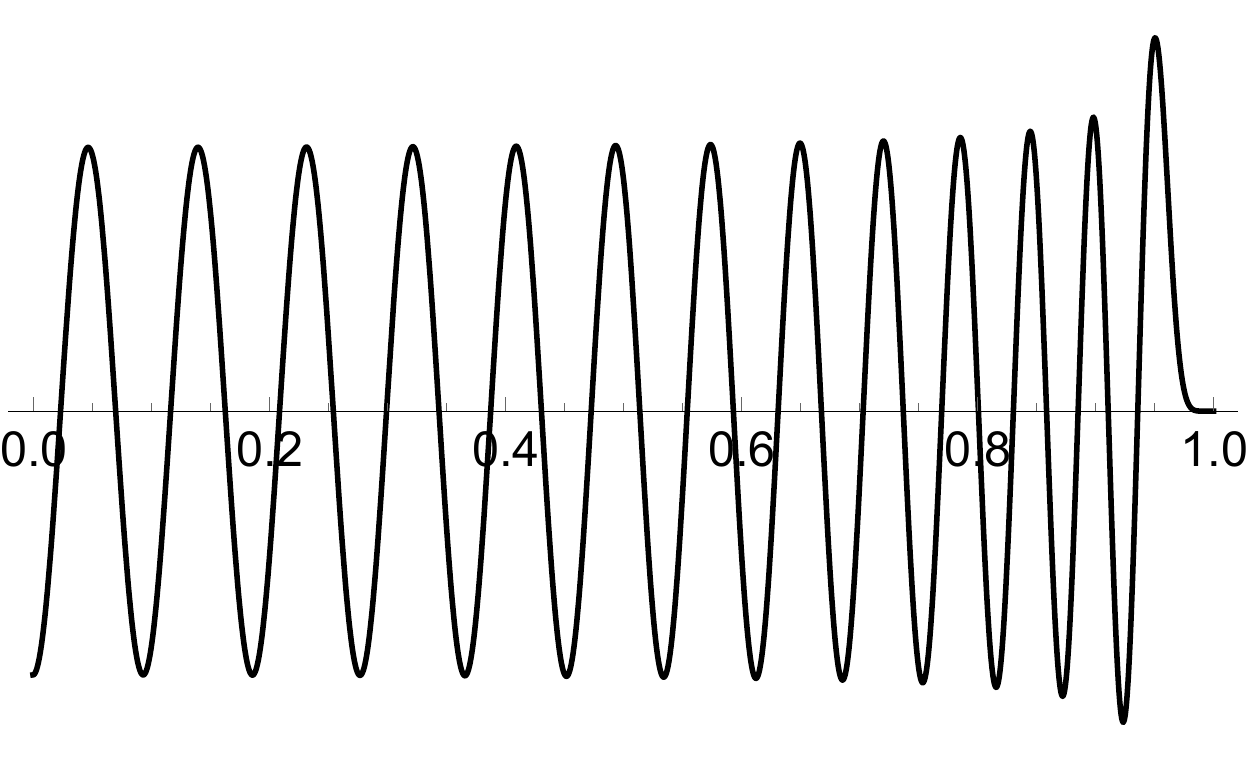}};
\node at (6,0) {\includegraphics[width=0.4\textwidth]{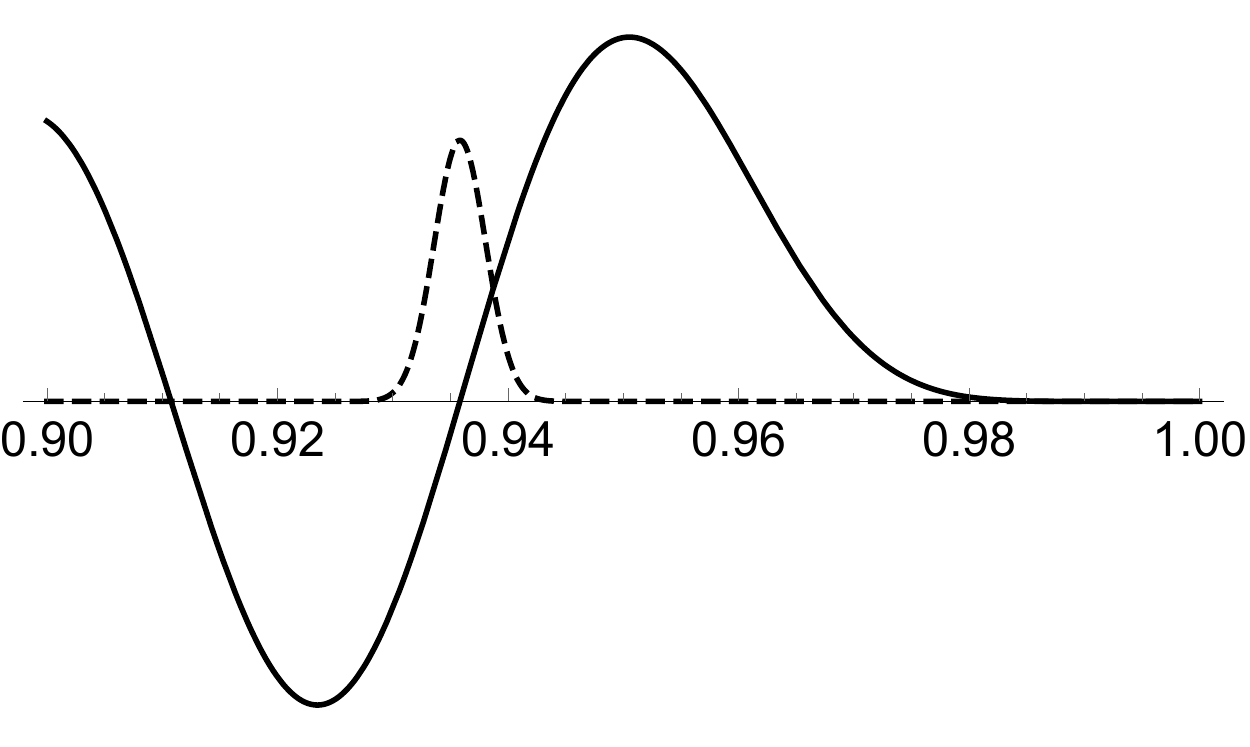}};
\end{tikzpicture}
\end{center}
\caption{Left: the function $C_{50}^{(20)}(x) (1-x^2)^{10}$ on $[0,1]$ where $(1-x^2)^{10}$ is multiplied to emphasize the overall sign structure (note that $C_{50}^{(20)}(1) \neq 0$). Right: the same function shown close to 1 with a possible choice for $g$ hinted (dashed).}
\end{figure}
The roots of the Gegenbauer polynomials are simple which means that $C_m^{(\lambda)}$ changes sign in $x_1$. At this point, we define the function $g:[-1,1] \rightarrow \mathbb{R}$ to be a positive bump function compactly supported in a sufficiently small interval $J$ around $x_1$, where $J$ is chosen such that
$$ 1 - \frac{(\lambda + 3)^2}{m^2} = \inf J < x_1 < \sup J \leq 1$$
and $g$ is chosen in such a way that $g \geq 0$ and
$$ \int_{J} g(t) \cdot C_m^{\frac{d-2}{2}}(t) \cdot (1-t^2)^{\frac{d-3}{2}} dt = 0.$$
Since we have no further requirements on $g$, this can be done in many different ways: any arbitrary compactly supported bump function can be rescaled to be supported on a
sufficiently small interval and then sliding over the root and using the intermediate value theorem produces an example.
Recalling that $\lambda = (d-2)/2$,
$$ J \subseteq \left(1 - \frac{(d+4)^2}{4 m^2}, 1\right).$$
Observe that if $a,b \in \mathbb{S}^{d-1}$ are two points on the sphere with inner product $\left\langle a,b \right\rangle = x_1$, then the Euclidean distance between these points satisfies
$$ \| a - b\|^2 = 2 - 2\left\langle a, b\right\rangle \leq 2  - 2 \left(1 - \frac{(d+4)^2}{4 m^2}\right) = \frac{(d+4)^2}{2 m^2}$$
and thus
$$ \| a-b\| \leq  \frac{d+4}{\sqrt{2} } \frac{1}{m}.$$

We now return to the new function
$$ f^*(x) =  \int_{\mathbb{S}^{d-1}} g(\left\langle x, y\right\rangle) f(y) d\sigma(y)$$
and conclude, from the computation above and $\lambda_m(g) = 0$, that
$$ f^*(x) =  \sum_{k \in S \setminus \left\{m \right\}}^{} a_k \lambda_k(g) f_k(x).$$
 We know that if there is a Euclidean ball $B(x,r(f))$ of radius $r(f)$ such that $f$ does not have a zero in $B(x,r(f)) \cap \mathbb{S}^{d-1}$, then $f^*$ contains a ball
of radius at least
$$ r(f^*) \geq r(f) - \frac{d+4}{\sqrt{2}} \frac{1}{m}$$
on which the function does not have a zero. By induction hypothesis, we have
$$ r(f) \leq r(f^*) +   \frac{d+4}{\sqrt{2}} \frac{1}{m} \leq \frac{d+4}{\sqrt{2}} \frac{1}{m} + 2\pi d  \sum_{k \in S \setminus \left\{m\right\}} \frac{1}{k} \leq 2\pi d  \sum_{k \in S} \frac{1}{k}$$
This constant is with respect to measuring distances using the Euclidean norm in $\mathbb{R}^d$, switching to the geodesic
distance incurs another factor of $\pi/2$ which the proves the desired result.
\end{proof}

\section{Proof of Theorem 3}
\begin{proof}
We argue again using induction on $n$.\\
\textit{The case $n = 1$.} We establish this case by proving the Corollary first. Let $-\Delta \phi = \lambda \phi$ be a smooth, global eigenfunction on $\mathbb{R}^d$ where $d \in \left\{2,3\right\}$. The main ingredient in our argument is
the inhomogeneous wave equation 
$$\left(\frac{\partial^2}{\partial t^2} - \Delta\right) u(t,x) = \phi(x) $$
with vanishing initial conditions
$$ u \big|_{t=0}  =0 \qquad \mbox{and} \qquad  \frac{\partial u}{\partial t} \big|_{t=0} = 0.$$
An explicit computation shows that this equation has the closed-form solution
$$ u(t,x) = \frac{\cos(\sqrt{\lambda} t)-1}{\lambda} \phi(x).$$
We note, in particular, that at time $t^* = 2\pi/\sqrt{\lambda}$ we have $u(t,x) = 0$. However, the inhomogeneous wave equation on $\mathbb{R}^d$ with $d \in \left\{2,3\right\}$ and vanishing initial conditions has a nice closed-form solution as well. In  $\mathbb{R}^2$ this solution is
$$ u(t,x) = \int_0^t \frac{1}{2\pi (t-s)^2} \int_{B(x, t-s)} \frac{ (t-s)^2 \phi(y)}{((t-s)^2 - |y - x|^2)^{1/2}} dy ds.$$
In $\mathbb{R}^3$, the solution is
$$ u(t,x) = \frac{1}{4\pi} \int_{B(x,t)} \frac{f(y)}{\| y-x\|} dx.$$
We set $t^* = 2\pi/\sqrt{\lambda}$ and see that $\phi$ has a root in each ball with radius
$ r = 2\pi  \lambda^{-1/2}.$\\
\textit{The case $n \geq 2$.} 
Let  now
$$ f(x) = \sum_{k=1}^n a_k \phi_k(x)$$
and let us assume without loss of generality that $\lambda_1 \leq \lambda_2 \leq \dots \leq \lambda_n$ and $a_i \neq 0$ for all $1 \leq i \leq n$.
 We again consider
the inhomogeneous wave equation 
$$\left(\frac{\partial^2}{\partial t^2} - \Delta\right) u(t,x) = f(x) $$
with vanishing initial conditions
$ u |_{t=0}  =0$ and $u_t |_{t=0}= 0$
and see that
$$ u(t,x) = \sum_{k = 1}^n a_k \frac{\cos(\sqrt{\lambda_k} t)-1}{\lambda_k} \phi_k(x).$$
At time $ t^* = 2\pi/\sqrt{\lambda_n}$
the solution can be written as
$$ g(x) = u(t^*,x) = \sum_{k =1}^{n-1} a_k \frac{\cos(\sqrt{\lambda_k} t)-1}{\lambda_k} \phi_k(x).$$
Suppose now that $f(x)$ does not have a zero on the ball $B(z, r(f))$ and is either positive or negative in that region. From the explicit solution formula of the wave equation
we see that, for all $0 \leq t \leq r(f)$ the function
 $$ u(t,x) \qquad \mbox{does not change sign on} \quad B(z, r(f) - t).$$
 We set $ t^* = 2\pi/\sqrt{\lambda_n}$ and conclude that $u(t^*,x)$ does not change sign on a ball of radius $r(f) - t^*$ (note that if $r(f) \leq t^*$, then the desired result follows automatically). However, by induction assumption we have that
 $$ u(t^*,x) \qquad \mbox{must change sign on every ball of radius} \qquad 2 \pi \sum_{k=1}^{n-1} \frac{1}{\sqrt{\lambda_k}}$$
 and therefore
 $$ r(f) - t^* \leq 2 \pi \sum_{k=1}^{n-1} \frac{1}{\sqrt{\lambda_k}}$$
 and the desired result follows.
\end{proof}

\section{Proof of Theorem 4}
\begin{proof} We will assume, throughout the argument, that $n \geq 3$.
Suppose $f: \mathbb{R}^n \rightarrow \mathbb{R}$ satisfies $-\Delta f = \lambda f$ in some neighborhood of the ball
$$ B = B\left(x, \frac{2\pi}{\sqrt{\lambda}} \right).$$
We introduce the average value on a spherical shell of radius $r$ centered around $x$
$$\mbox{Av}(r) = \fint_{\partial B(x,r)} f(y) dy.$$
Using the Green identities in $\mathbb{R}^n$ in the formulation (see \cite[\S 2.2.2]{evans})
$$ \frac{\partial}{\partial r} \fint_{\partial B(x,r)} u(y) dy = \frac{r}{n}\fint_{B(x,r)} \Delta u(y) dy$$
we deduce that
$$ \frac{\partial}{\partial r} \mbox{Av}(r) =  \frac{r}{n} \frac{1}{\omega_n r^n} \int_{B(x,r)} \Delta f~ dy.$$
We note that if $\omega_n$ denotes the volume of the unit ball in $\mathbb{R}^n$ then the surface area of a spherical shell is then given by $n \omega_n r^{n-1}$ since
$$  \omega_n r^n = \int_{B(x,r)} 1 dy = \int_0^r n \omega_n s^{n-1} ds.$$
Since $\Delta f = - \lambda f$, we deduce, rewriting everything in terms of spherical averages,
\begin{align*}
 \frac{\partial}{\partial r} \mbox{Av}(r) &= - \frac{\lambda}{n \omega_n r^{n-1}} \int_{B(x,r)}  f(y) dy \\
 &=  - \frac{\lambda}{n \omega_n r^{n-1}} \int_{0}^r \omega_n n s^{n-1} \mbox{Av}(s) dy\\
 &= - \frac{\lambda}{ r^{n-1}} \int_{0}^r  s^{n-1} \mbox{Av}(s) dy.
 \end{align*}

The final ingredient is the function
$$ Q(r) = \int_{B(x,r)} \frac{f(y)}{\|x-y\|^{n-2}} dy.$$
We note that, since $f$ is locally bounded that for $r$ small, we have
$$ \left| Q(r) \right| \lesssim \int_{B(x,r)} \frac{\|f\|_{L^{\infty}_{\mbox{\tiny loc}}}}{\|x-y\|^{n-2}} dy \lesssim \|f\|_{L^{\infty}_{\mbox{\tiny loc}}} \cdot r^2$$
and therefore $Q(0) = 0$ and $Q'(0) = 0$. Using continuity of the eigenfunction, we deduce that, as $r \rightarrow 0$,  
\begin{align*}
Q(r) &=  \int_{B(x,r)} \frac{f(y)}{\|x-y\|^{n-2}} dy = \left(f(x) + \mathcal{O}(r)\right) \int_{B(x,r)} \frac{1}{\|x-y\|^{n-2}} dy \\
&=  \left(f(x) + \mathcal{O}(r)\right) \int_0^r \frac{n \omega_n s^{n-1}}{s^{n-2}} ds = \left(f(x) + \mathcal{O}(r)\right) \frac{n \omega_n}{2} r^{2}
\end{align*}
from which we deduce
$ Q''(0) = n \omega_n f(x).$
By switching to polar coordinates,
$$ Q(r) =  \int_0^r  n \omega_n s \mbox{Av}(s)ds.$$
Differentiating on both sides leads to
$ Q'(r) = n \omega_n r \mbox{Av}(r)$
and differentiating again
\begin{align*}
 Q''(r) &= n \omega_n  \mbox{Av}(r) + n \omega_n r \left( - \frac{\lambda}{ n \omega_n r^{n-1}} \int_{B(x,r)}  f(y) dy \right) \\
 &=  n \omega_n  \mbox{Av}(r)  -  n \omega_n  \frac{\lambda r}{ n \omega_n r^{n-1}} \int_{B(x,r)}  f(y) dy \\
  &= n \omega_n  \mbox{Av}(r)  -  n \omega_n  \frac{\lambda}{ r^{n-2}} \int_0^r  s^{n-1} \mbox{Av}(s) ds.
 \end{align*}
 Therefore
 $$ r^{} Q''(r) = n \omega_n r^{} \mbox{Av}(r) - \frac{\lambda}{r^{n-3}} \int_0^r  \mbox{Av}(s) n \omega_n s^{n-1} ds.$$
Using the identity $ Q'(r) = n \omega_n r \mbox{Av}(r)$ we can rewrite this as
$$ r Q''(r) = Q'(r) - \frac{\lambda}{r^{n-3}} \int_0^r s^{n-2} Q'(s) ds.$$
 Integration by parts shows that
 \begin{align*}
   \int_0^r s^{n-2} Q'(s) ds &= Q(s) s^{n-2}\big|_0^r -  (n-2) \int_0^r  Q(s) s^{n-3} ds \\
   &= Q(r) r^{n-2} - (n-2)\int_0^r Q(s) s^{n-3} ds.
   \end{align*}
Therefore
$$ r Q''(r) = Q'(r) - \lambda Q(r) r + \frac{(n-2) \lambda}{r^{n-3}} \int_0^r Q(s) s^{n-3}ds.$$
At this point we already see that $Q(s)$ is governed by some sort of differential-integral equation that is quite independent of the
actual eigenfunction. The remainder of the argument is dedicated to understanding what that equation is.
Multiplying with $r^{n-3}$, we get
$$ r^{n-2} Q''(r) = r^{n-3} Q'(r) - \lambda Q(r) r^{n-2} + (n-2) \lambda \int_0^r Q(s) s^{n-3}ds.$$
Differentiating in $r$ leads to
\begin{align*}
 r^{n-2} Q'''(r) + (n-2) r^{n-3} Q''(r)  &= (n-3) r^{n-4} Q'(r) + r^{n-3} Q''(r) - \lambda Q'(r) r^{n-2} \\
 &- (n-2) \lambda Q(r) r^{n-3} + (n-2) \lambda Q(r) r^{n-3}.
 \end{align*}
 The last two terms cancel, the equation simplifies to 
 \begin{align*}
 r^{n-2} Q'''(r) + (n-2) r^{n-3} Q''(r)  &= (n-3) r^{n-4} Q'(r) + r^{n-3} Q''(r) - \lambda Q'(r) r^{n-2}
 \end{align*}
 which then further simplifies to
  \begin{align*}
 r^{n-2} Q'''(r) + (n-3) r^{n-3} Q''(r)  &= (n-3)   r^{n-4} Q'(r) - \lambda Q'(r) r^{n-2}.
 \end{align*}
 At this point we make a case distinction. If $n =3$, then the system simplifies to
 $  r^{n-2} Q'''(r)   =  - \lambda Q'(r) r^{n-2}$ and thus  $ Q'''(r)   =  - \lambda Q'(r)$ from which we deduce, together
 with the initial conditions, that
 $$ Q(r) =  4 \pi \frac{1 - \cos{(\sqrt{\lambda}\cdot r)}}{\lambda} \cdot \phi(x).$$
We can now resume, for the remainder of the argument, that $n \geq 4$ and thus, in particular, divide by $r^{n-4}$ to arrive that
$$  r^{2} Q'''(r) + (n-3) r^{} Q''(r)  = (n-3)    Q'(r) - \lambda Q'(r) r^{2}.$$

 Working instead with the derivative $R(r) = Q'(r)$, we deduce $R(0) = 0$ as well as $R'(0) = n \omega_n f(x)$ together with the equation
$$  r^{2} R''(r) + (n-3) r^{} R'(r)  - (n-3)    R(r) + \lambda R(r) r^{2} = 0.$$
 Two independent solutions of this equation are given in terms of the Bessel functions of the first and the second kind
 $$ r^{\frac{4-n}{2}} J_{\frac{n-2}{2}}(\sqrt{\lambda} \cdot r) \qquad \mbox{and} \qquad r^{\frac{4-n}{2}} Y_{\frac{n-2}{2}}(\sqrt{\lambda} \cdot r).$$
 The Bessel function of the second kind have a singularity at $r = 0$ which leaves us with the one-parameter family
 $$ R(r) = \alpha \lambda^{\frac{n-4}{4}}(\sqrt{\lambda} \cdot r)^{\frac{4-n}{2}} J_{\frac{n-2}{2}}(\sqrt{\lambda} \cdot r).$$
 We note that 
 $$ S(r) =  r^{\frac{4-n}{2}} J_{\frac{n-2}{2}}(r) \quad \mbox{satisfies} \qquad \lim_{r \rightarrow 0} S'(r) = \frac{2^{- \frac{n-2}{2}}}{\Gamma(n/2)}.$$
 Therefore
 $$  n \omega_n f(x) = R'(0) = \alpha  \frac{2^{- \frac{n-2}{2}}}{\Gamma(n/2)} \lambda^{\frac{n-2}{4}}$$
 from which it follows that
 $$ \alpha = \frac{2^{\frac{n-2}{2}} \Gamma(n/2) n \omega_n}{\lambda^{\frac{n-2}{4}}}.$$
 Therefore
 \begin{align*}
 R(r) &= \alpha \lambda^{\frac{n-4}{4}}(\sqrt{\lambda} \cdot r)^{\frac{4-n}{2}} J_{\frac{n-2}{2}}(\sqrt{\lambda} \cdot r) \\
 &= \frac{2^{\frac{n-2}{2}} \Gamma(n/2) n \omega_n}{\lambda^{\frac{n-2}{4}}} \lambda^{\frac{n-4}{4}}(\sqrt{\lambda} \cdot r)^{\frac{4-n}{2}} J_{\frac{n-2}{2}}(\sqrt{\lambda} \cdot r) \\
 &= 2^{\frac{n-2}{2}} \Gamma(n/2) n \omega_n \frac{1}{\sqrt{\lambda}} (\sqrt{\lambda} \cdot r)^{\frac{4-n}{2}} J_{\frac{n-2}{2}}(\sqrt{\lambda} \cdot r)\\
 &=\frac{1}{\sqrt{\lambda}} S_n(\sqrt{\lambda}\cdot r),
 \end{align*}
 where 
 $$ S_n(s) = 2^{\frac{n-2}{2}} \Gamma(n/2) n \omega_n s^{\frac{4-n}{2}} J_{\frac{n-2}{2}}(s).$$
 Introducing the antiderivative
 $$ T_n(s) = \int_0^s S_n(z) dz,$$
 we deduce
 $$ Q(r) = \frac{1}{\sqrt{\lambda}}T_n( \sqrt{\lambda} \cdot r) \frac{1}{\sqrt{\lambda}} = \frac{1}{\lambda} \cdot  T_n( \sqrt{\lambda} \cdot r).$$
\end{proof}

\end{document}